\documentclass[11pt]{article}

\usepackage{fullpage}

\usepackage{amsmath, amssymb, amsthm}

\usepackage[utf8]{inputenc}
\usepackage[T1]{fontenc}

\usepackage{parskip}

\newtheorem{theorem}{Theorem}[section]

\title{Note on Finite-Automata Bernoulli Factories\\for Rational Functions}
\date{\today}
\author{Renato Paes Leme \\
Google Research \and Jon Schneider\\
Google Research}

\begin{document}

\maketitle

\begin{abstract}
Mossel and Peres (2005) established a comprehensive framework for designing Bernoulli factories. Notably, they demonstrated that a single-variable function admits a finite-automata Bernoulli factory if and only if it is a rational function. Their Theorem 2.9 claims an extension of this result to multivariable functions, but it contains a subtle technical oversight in the application of Pólya's Theorem. We provide a direct counterexample: a rational function in three variables that admits a general Bernoulli factory but cannot be implemented by a finite-automata Bernoulli factory.

\end{abstract}

\section{Introduction}

The Bernoulli factory problem asks whether one can simulate a target probability distribution using samples from an unknown source distribution, provided the source distribution belongs to a known family. A foundational result in this area is Keane and O'Brien's theorem \cite{keane1994}, which characterizes the functions implementable by Bernoulli factories.

Mossel and Peres \cite{mossel2005} generalized and streamlined these ideas, investigating Bernoulli factories for functions $f: \Delta_s \to (0,1)$, where $\Delta_s = \{ (p_1, \dots, p_{s+1}) \in (0,1)^{s+1} : \sum_{i=1}^{s+1} p_i = 1 \}$ denotes the open $s$-dimensional simplex. They consider both general Bernoulli factories and finite-automata Bernoulli factories (also called simulation via \textit{blocks}).
They show that a function $f: \Delta_s \to (0,1)$ admits a finite-automata Bernoulli factory if it can be represented as a ratio of two homogeneous polynomials with non-negative coefficients (often referred to as Bernstein polynomials). In Section 2 of their paper, Mossel and Peres present the following algebraic characterization:

\begin{quote}
\textbf{Theorem 2.9 (Mossel and Peres \cite{mossel2005}).} \textit{Any rational function $f: \Delta_s \to (0,1)$ can be simulated via blocks.}
\end{quote}

The overarching idea behind their proof is elegant: given a rational function $f = A/B$ mapping the simplex $\Delta_s$ to $(0,1)$, one can invoke Pólya's Theorem \cite{polya1928} to multiply the numerator and denominator by a suitable power of $(p_1 + \dots + p_{s+1})$, thereby clearing any negative coefficients in $B$.

This proof strategy is perfectly fine for the single-variable case ($s=1$, representing a single unknown bias $p_1$ and $p_2 = 1-p_1$). They state without proof that their ideas can be extended to multiple variables ($s \ge 2$) using the same proof. However, a subtle technical detail in applying Pólya's Theorem in higher dimensions was overlooked.

Specifically, there is a crucial distinction between the open simplex $\Delta_s$ and the closed simplex $\bar{\Delta}_s$. Pólya's Theorem guarantees that if a homogeneous polynomial $B(\mathbf{p})$ is strictly positive on the \textit{closed} simplex $\bar{\Delta}_s$, then for sufficiently large $k$, the polynomial $B(\mathbf{p})(p_1 + \dots + p_{s+1})^k$ has strictly non-negative coefficients. When $f$ is bounded away from $0$ and $1$ on the \textit{open} simplex $\Delta_s$, it does not preclude the coprime denominator $B(\mathbf{p})$ from vanishing at points on the boundary $\partial \Delta_s$. 

In the single-variable case ($s=1$), this boundary vanishing does not present an obstacle. In the polynomial ring $\mathbb{R}[p_1, p_2]$, any zeros on the boundary (viz., at $(1,0)$ or $(0,1)$) correspond to linear factors $p_1$ or $p_2$. Since $f = A/B$ is bounded away from $0$ and $1$, any boundary zeros of $B$ must be shared by $A$ and can be factored out and canceled. However, in multivariable rings ($s \ge 2$), coprime polynomials can simultaneously vanish on the boundary without sharing common factors. Consequently, the denominator can vanish on $\partial \Delta_s$ in a way that cannot be factored out.

In this note, we make this observation explicit by constructing a rational function on the 2-simplex $\Delta_2$ that admits a Bernoulli factory but cannot be implemented by a finite-automata Bernoulli factory, thereby providing a direct counterexample to Theorem 2.9 in higher dimensions. Geometrically, the counterexample is related to a cusp singularity on a half-plane in algebraic geometry.

\section{Setup}

Let $\Delta_s = \{ \mathbf{p} = (p_1, \dots, p_{s+1}) \in (0,1)^{s+1} : \sum_{i=1}^{s+1} p_i = 1 \}$ denote the open $s$-dimensional simplex, and let $\bar{\Delta}_s$ denote its closure. A polynomial $P(\mathbf{p}) \in \mathbb{R}[p_1, \dots, p_{s+1}]$ is called a \textit{Bernstein polynomial} on the simplex if it is homogeneous and all its coefficients are non-negative; that is, $P(\mathbf{p}) = \sum_{\alpha} c_\alpha p_1^{\alpha_1} \cdots p_{s+1}^{\alpha_{s+1}}$ with $c_\alpha \ge 0$ and $\sum_{i=1}^{s+1} \alpha_i = d$.

The necessary and sufficient conditions for a function to admit a general Bernoulli factory were originally established for the single-variable case by Keane and O'Brien \cite{keane1994}. Morina \cite{morina2021} (see also \cite{leme2022}) generalized this characterization to the multivariate simplex, establishing the following foundational criteria:

\begin{theorem}[Morina \cite{morina2021}]
\label{thm:morina}
A function $f: \Delta_s \to (0,1)$ admits a Bernoulli factory if and only if $f$ is continuous and there exist constants $c > 0$ and $k \ge 1$ such that for all $\mathbf{p} \in \Delta_s$:
\[
\min(f(\mathbf{p}), 1 - f(\mathbf{p})) \ge c \prod_{i=1}^{s+1} p_i^k
\]
\end{theorem}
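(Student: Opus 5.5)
The plan is to prove the two directions separately. Necessity is a short argument about finite histories of the factory. Sufficiency goes through a multivariate version of the Keane--O'Brien / Nacu--Peres approximation scheme, in which the boundary behaviour of $f$ is handled by the monomial lower bound.

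\emph{Necessity.} Model a Bernoulli factory as a stopping rule on i.i.d.\ draws from the $(s+1)$-sided die with law $\mathbf{p}$, together with an output bit. Since it terminates almost surely,
\[
f(\mathbf{p})=\sum_{w\in W_1}\prod_{j}p_{w_j}
\quad\text{and}\quad
1-f(\mathbf{p})=\sum_{w\in W_0}\prod_j p_{w_j},
\]
where $W_1$ and $W_0$ are the sets of finite stopping words with outputs $1$ and $0$. Each partial sum of either series is a polynomial, so both $f$ and $1-f$ are increasing limits of continuous functions. Hence both are lower semicontinuous, and therefore $f$ is continuous. For the lower bound, $f>0$ forces $W_1\neq\emptyset$. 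Pick $w\in W_1$ of length $k_1$; then $f(\mathbf{p})\ge \prod_j p_{w_j}\ge \prod_i p_i^{k_1}$, because each $p_i\le 1$ and each letter of $w$ is counted in the exponent of its own coordinate. Applying the same argument to $W_0$ with length $k_0$, and taking $k=\max(k_0,k_1)$ and $c=1$, gives the stated inequality.

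\emph{Sufficiency.} I would first establish the multivariate Nacu--Peres criterion. A factory for $f$ exists if there are homogeneous polynomials $g_n\le f\le h_n$ of degree $n$ (along a subsequence, say $n=2^m$) with the following properties:
\begin{itemize}
\item their coefficients in the basis $\binom{n}{\alpha}\mathbf{p}^\alpha$ lie in $[0,1]$;
\item after degree elevation by multiplication with $(p_1+\dots+p_{s+1})^{n'-n}$, the coefficients of $g_n$ are dominated by those of $g_{n'}$, and those of $h_n$ dominate those of $h_{n'}$;
\item $g_n\to f$ and $h_n\to f$ pointwise.
\end{itemize}
The proof of this criterion is the usual urn/coupling construction: after drawing $n$ samples, the empirical type $\alpha$ is uniform on its type class, and the consistency conditions let one decide the output bit sequentially with conditional probabilities in $[0,1]$. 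This part is a routine transcription of the one-dimensional proof.

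It then remains to construct $g_n$ and $h_n$; I expect this to be the main obstacle, because $f$ may tend to $0$ or $1$ at $\partial\Delta_s$. My first attempt would split $f$ using the hypothesis, replacing $\prod_i p_i^k$ by $q(\mathbf{p}):=c\prod_i p_i^k$ with $c>0$ small enough that the bound below holds. Since $q$ is itself a Bernstein polynomial, which a finite automaton simulates directly, the plan is to write
\[
f=q+(1-2q)\,\tilde f,
\qquad
\tilde f=\frac{f-q}{1-2q}.
\]
The hypothesis, applied with a slightly larger constant, gives $f-q\ge q$ and $1-f-q\ge q$. Hence $\tilde f$ takes values in $[q/(1-2q),\,1-q/(1-2q)]$, which is still only polynomially bounded away from $0$ and $1$. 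So this decomposition alone does not remove the boundary issue, and the real work is the approximation near $\partial\Delta_s$.

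For that, I would use the multivariate Bernstein operator $B_nf(\mathbf{p})=\sum_{|\alpha|=n}f(\alpha/n)\binom{n}{\alpha}\mathbf{p}^\alpha$ on interior types, corrected by a variance term $\pm C\,\omega(n^{-1/2})$, where $\omega$ is a modulus of continuity. This correction makes the monotone-consistency conditions hold, but only on compact sets $\{\min_i p_i\ge \delta_n\}$ with $\delta_n\to 0$ slowly. Near the boundary, I would replace these coefficients by those of the monomial minorant $q\,(p_1+\dots+p_{s+1})^{n-(s+1)k}$ for $g_n$, and symmetrically $1-q(\cdots)$ for $h_n$. The delicate point is to choose $\delta_n$, and to interpolate between the two regimes, so that the Bernstein coefficients stay in $[0,1]$ and remain monotone under degree elevation. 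The polynomial lower bound is exactly what makes the error $\omega(n^{-1/2})$ smaller than the gap $f-q$ on the region where the Bernstein coefficients are used.
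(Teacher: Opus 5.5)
The note contains no proof of this statement. It quotes it from Morina (see also Paes Leme--Schneider) and uses it only as a black box in Part~1 of the counterexample, so your argument has to stand on its own.

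Your necessity direction does stand. Both $f$ and $1-f$ are sums of nonnegative polynomials, hence lower semicontinuous, so $f$ is continuous. A single stopping word of each output then gives the monomial bound. If the factory may use auxiliary randomness, replace $\prod_j p_{w_j}$ by $a_w\prod_j p_{w_j}$ with $a_w\in(0,1]$, and take $c=\min(a_w,b_{w'})$. The multivariate Nacu--Peres criterion is also a sound framework, since only exchangeability and the multivariate hypergeometric law enter.

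The sufficiency direction, however, is not proved. Building $g_n,h_n$ up to $\partial\Delta_s$ is the entire content of that direction, and you leave it as ``the delicate point.'' Moreover, the sharp version of your scheme cannot work, because its two requirements conflict.

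Let $\mu^{(n)}_\alpha$ be the coefficients of $q\,(p_1+\dots+p_{s+1})^{n-(s+1)k}$. These are \emph{exactly} consistent under elevation. Take a boundary type $\alpha$ at level $n$, where $a^{(n)}_\alpha=\mu^{(n)}_\alpha$, and let $H$ be the multivariate hypergeometric law. The condition $\sum_\beta H(\beta\mid\alpha)\,a^{(m)}_\beta\le\mu^{(n)}_\alpha$ then fails as soon as $H(\cdot\mid\alpha)$ charges a single interior type $\beta$ at level $m$ with $f(\beta/m)-\epsilon_m>\mu^{(m)}_\beta$. That inequality holds throughout the interior region once $\epsilon_m$ is a fixed fraction of the gap $f-q$.

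Every $\beta\le\alpha$ with $|\beta|=m$ is charged. So if $n\delta_n>m\delta_m$, choose a type with $m\delta_m\le\alpha_i<n\delta_n$ and its other coordinates large. It admits an interior $\beta$ with $\beta_i=\alpha_i$, and the condition is violated. Consistency therefore forces $n\delta_n$ to be nonincreasing, i.e.\ $\delta_n=O(1/n)$.

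But then the interior region contains types within $O(1/n)$ of $\partial\Delta_s$. There $f$ may be as small as $2c\prod_j p_j^k=O(n^{-k})$. This is far below a uniform correction $\epsilon_n$, which for a general Lipschitz $f$ must be of order $n^{-1/2}$, so the coefficients go negative. Your requirement $\omega(n^{-1/2})<c\,\delta_n^{(s+1)k}$ needs $\delta_n\to0$ slowly, which is exactly what interface consistency forbids.

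Taking $\max\bigl(\mu^{(n)}_\alpha,\,f(\alpha/n)-\epsilon_n\bigr)$ does not repair this. Elevation is an averaging operator, so the lower condition survives taking $\min$ of two consistent sequences, but not $\max$. The types charged by one hypergeometric law stay near the same face but can move along it, and there $f$ may be of order $q$ at one point and of order $1$ at another. Any smooth interpolation faces the same interface inequality, which you do not address.

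Separately, $f$ need not have a global modulus of continuity on $\Delta_s$; for example, $\tfrac12+\tfrac14\sin(1/p_1)$ satisfies all the hypotheses. So the corrections must be local, adapted to $\{\min_i p_i\ge\delta\}$.

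What is missing is a genuine boundary mechanism. One option is to build the coefficients near $\partial\Delta_s$ so that they dominate the elevation of the previous level's coefficients, with type-dependent corrections; another is some different reduction. Either way you must also prove that the resulting $g_n,h_n$ still converge to $f$. Without this, the sufficiency direction remains open in your write-up.
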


As established in \cite{mossel2005}, a rational function $f: \Delta_s \to (0,1)$ is implementable by a finite-automata Bernoulli factory if and only if there exist Bernstein polynomials $A(\mathbf{p})$ and $B(\mathbf{p})$ such that $f(\mathbf{p}) = A(\mathbf{p})/B(\mathbf{p})$ for all $\mathbf{p} \in \Delta_s$.

We recall that the multivariate polynomial ring $\mathbb{R}[p_1, \dots, p_{s+1}]$ is a Unique Factorization Domain (UFD) and an integral domain. Consequently, if $f = P/Q$ where $P$ and $Q$ are coprime, any alternative rational representation $A/B = P/Q$ must satisfy $A = P \cdot R$ and $B = Q \cdot R$ for some non-zero polynomial $R \in \mathbb{R}[p_1, \dots, p_{s+1}]$.

\section{Counterexample}

We now present the main result of this note: an explicit counterexample to Theorem 2.9 in \cite{mossel2005} for multivariable rational functions.

\begin{theorem}
\label{thm:counterexample}
There exists a rational function $g: \Delta_2 \to (0,1)$ that is polynomially bounded away from $0$ and $1$ (and hence admits a Bernoulli factory) but cannot be implemented by a finite-automata Bernoulli factory.
\end{theorem}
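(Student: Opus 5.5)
The plan is to make the denominator vanish at an interior point of an edge of $\bar{\Delta}_2$, namely $\mathbf{q}=(1/2,1/2,0)$, while keeping the function well behaved on the open simplex. The obstruction is simple. A Bernstein polynomial $B$ of degree $d$ satisfies
\[
B(1/2,1/2,0)=2^{-d}\sum_{\alpha_3=0} c_\alpha .
\]
This sum has only non-negative terms, so $B(\mathbf{q})=0$ forces every monomial of $B$ to contain $p_3$, i.e. $p_3 \mid B$. Moreover, dividing a Bernstein polynomial by $p_3$ leaves a Bernstein polynomial.

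Concretely, I would take $s=p_1+p_2+p_3$ and
\[
g=\frac{P}{Q},\qquad P=p_3\, s,\qquad Q=(p_1-p_2)^2+2p_3\, s .
\]

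\textbf{Step 1: $g$ admits a Bernoulli factory.} On $\Delta_2$ we have $s=1$, so $Q\ge 2p_3>0$ and $g$ is continuous. Since $Q\ge 2P$, we get $0<g\le 1/2$, hence $1-g\ge 1/2$. Using $(p_1-p_2)^2\le s^2$, we get $Q\le 3s^2$ and therefore
\[
g\ \ge\ \frac{p_3}{3}\ \ge\ \frac{1}{3}\,p_1p_2p_3 .
\]
Thus Theorem~\ref{thm:morina} applies with $c=1/3$ and $k=1$.

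\textbf{Step 2: $P$ and $Q$ are coprime.} The irreducible factors of $P$ are $p_3$ and $s$.
\begin{itemize}
\item Setting $p_3=0$ sends $Q$ to $(p_1-p_2)^2\neq 0$, so $p_3\nmid Q$.
\item Substituting $p_3=-p_1-p_2$ (i.e. working modulo $s$) sends $Q$ to $(p_1-p_2)^2\ne 0$, so $s\nmid Q$.
\end{itemize}
Note also that $P(\mathbf{q})=Q(\mathbf{q})=0$, which is exactly the kind of shared boundary zero the introduction describes.

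\textbf{Step 3: no Bernstein representation exists.} Suppose $g=A/B$ on $\Delta_2$ with $A,B$ Bernstein.
\begin{itemize}
\item The identity $AQ=BP$ holds on a nonempty open subset of the plane $s=1$. Both sides are homogeneous, so it holds on the cone over that set and hence as a polynomial identity.
\item By coprimality and the UFD remark in the Setup, $A=PR$ and $B=QR$ for some nonzero $R$.
\item Write $R=p_3^m R_0$ with $p_3\nmid R_0$. Then $B/p_3^m=QR_0$ is still Bernstein.
\item This polynomial is not divisible by $p_3$, since $p_3$ is prime and divides neither $Q$ nor $R_0$.
\item On the other hand, $QR_0$ vanishes at $\mathbf{q}$ because $Q$ does. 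By the opening observation, $p_3\mid QR_0$, a contradiction.
\end{itemize}
Hence $g$ cannot be implemented by a finite-automata Bernoulli factory.

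The main point needing care is Step 3. One must check that the identity $A/B=P/Q$, known only on the open simplex, lifts to a polynomial identity; homogeneity handles this. One must also strip the $p_3$-power from $R$ before using the vanishing at $\mathbf{q}$. Otherwise the argument would appear to loop, since $B$ itself may well be divisible by $p_3$.
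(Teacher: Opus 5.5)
Your proof is correct, but it uses a different and simpler counterexample than the paper.

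\textbf{The paper's route.} The paper takes $g=p_1^3p_2/\bigl(p_3^2(p_1-p_2)^2+p_1^3(p_1+p_2+p_3)\bigr)$. Its denominator vanishes on $\bar{\Delta}_2$ only at the vertices $(0,0,1)$ and $(0,1,0)$. The paper dehomogenizes at $(0,0,1)$ and compares lowest-degree homogeneous parts. The obstruction is that the tangent cone of $Q$ there is $(p_1-p_2)^2$.

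\textbf{Your route.} You put the zero of $Q$ in the relative interior of the edge $\{p_3=0\}$. Your step of stripping $p_3^m$ from $R$ and then using vanishing at $\mathbf{q}$ is the same mechanism, graded by $p_3$-degree instead of total degree at a vertex. In both proofs, the initial form of $B=QR$ is the product of the initial forms. That initial form must have non-negative coefficients. Yet it is divisible by $(p_1-p_2)^2$, which vanishes at the positive point $p_1=p_2=1$.

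\textbf{What each version buys.} Yours is shorter. It has degree $2$, the obstruction shows up by restricting to a single edge, and the contradiction comes from primality of $p_3$. You also justify passing from an identity on $\Delta_2$ to a polynomial identity, which the paper skips. The paper's example is sharper. Its $Q$ vanishes on $\bar{\Delta}_2$ only on whole faces (two vertices), and its restriction to every edge already has non-negative coefficients. So a face-restriction test like yours cannot detect it. The failure appears only in the cusp-type local behavior at the vertex. This shows the obstruction to Theorem 2.9 can arise even when the denominator's boundary zeros form a union of faces.

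\textbf{One small fix.} Your lifting step tacitly assumes $\deg A=\deg B$; otherwise $AQ$ and $BP$ have different degrees and the cone argument fails. This is harmless: first multiply the lower-degree one of $A,B$ by a power of $p_1+p_2+p_3$, which preserves the Bernstein property.
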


\begin{proof}
Consider the homogeneous rational function $g: \Delta_2 \to (0,1)$ defined by:
\[
g(p_1, p_2, p_3) = \frac{p_1^3 p_2}{p_3^2 (p_1 - p_2)^2 + p_1^3 (p_1 + p_2 + p_3)}
\]
Let $P(\mathbf{p}) = p_1^3 p_2$ and $Q(\mathbf{p}) = p_3^2 (p_1 - p_2)^2 + p_1^3 (p_1 + p_2 + p_3)$. Note that on the simplex $\Delta_2$, $p_1 + p_2 + p_3 = 1$, so $Q(\mathbf{p})$ can be viewed as $p_3^2 (p_1 - p_2)^2 + p_1^3$.

\textbf{Part 1: Existence of a Bernoulli Factory.}
We first verify that $g(\mathbf{p})$ is strictly bounded between $0$ and $1$ on the open simplex $\Delta_2$ and satisfies the requisite polynomial boundedness condition near the boundary.
For any $\mathbf{p} \in \Delta_2$, $p_1, p_2, p_3 > 0$. Thus, $P(\mathbf{p}) = p_1^3 p_2 > 0$.
For the denominator, since $(p_1-p_2)^2 \ge 0$ and $p_1^3 > 0$, we have $Q(\mathbf{p}) > 0$.
Furthermore, examining the difference $Q(\mathbf{p}) - P(\mathbf{p})$, we have:
\[
Q(\mathbf{p}) - P(\mathbf{p}) = p_3^2 (p_1 - p_2)^2 + p_1^4 + p_1^3 p_2 + p_1^3 p_3 - p_1^3 p_2 = p_3^2 (p_1 - p_2)^2 + p_1^4 + p_1^3 p_3
\]
Since $p_1, p_3 > 0$, this difference is strictly positive. Thus, $0 < g(\mathbf{p}) < 1$ for all $\mathbf{p} \in \Delta_2$.

To verify polynomial boundedness near the boundary, observe that $Q(\mathbf{p})$ is a continuous function on the compact set $\bar{\Delta}_2$. Since each term in $Q(\mathbf{p})$ is bounded by $1$, $Q(\mathbf{p}) \le 2$ for all $\mathbf{p} \in \bar{\Delta}_2$. Thus, $1/Q(\mathbf{p}) \ge 1/2$, which implies $g(\mathbf{p}) = \frac{p_1^3 p_2}{Q(\mathbf{p})} \ge \frac{1}{2} p_1^3 p_2$. Since $p_i < 1$, this is bounded below by a positive constant times a monomial in $p_1, p_2, p_3$. Similarly, for $1 - g(\mathbf{p})$, we have $1 - g(\mathbf{p}) = \frac{Q(\mathbf{p}) - P(\mathbf{p})}{Q(\mathbf{p})} \ge \frac{1}{2} (p_1^4 + p_1^3 p_3) \ge \frac{1}{2} p_1^4$, which is also bounded below by a positive polynomial in the simplex variables. By Theorem \ref{thm:morina}, $g$ admits a Bernoulli factory.

\textbf{Part 2: Impossibility of Implementation by a Finite-Automata Bernoulli Factory.}
Suppose for contradiction that $g$ can be implemented by a finite-automata Bernoulli factory. Then there exist Bernstein polynomials $A(\mathbf{p})$ and $B(\mathbf{p})$ (homogeneous with non-negative coefficients) such that $g(\mathbf{p}) = A(\mathbf{p})/B(\mathbf{p})$.

We first establish that $P(\mathbf{p})$ and $Q(\mathbf{p})$ are coprime in $\mathbb{R}[p_1, p_2, p_3]$. The only irreducible factors of $P(\mathbf{p})$ are $p_1$ and $p_2$. Evaluating $Q(\mathbf{p})$ at $p_1 = 0$ yields $Q(0, p_2, p_3) = p_3^2 p_2^2 \neq 0$, so $p_1$ does not divide $Q$. Evaluating $Q(\mathbf{p})$ at $p_2 = 0$ yields $Q(p_1, 0, p_3) = p_3^2 p_1^2 + p_1^3(p_1 + p_3) \neq 0$, so $p_2$ does not divide $Q$. Thus, $P$ and $Q$ share no common non-constant factors.

Since $\mathbb{R}[p_1, p_2, p_3]$ is a UFD and $A/B = P/Q$, there must exist a non-zero homogeneous polynomial $R(\mathbf{p})$ such that $B(\mathbf{p}) = Q(\mathbf{p}) R(\mathbf{p})$.

To analyze this relation, we dehomogenize at the vertex $(0,0,1)$ by setting $p_3 = 1$. Let $b(p_1, p_2) = B(p_1, p_2, 1)$, $q(p_1, p_2) = Q(p_1, p_2, 1)$, and $r(p_1, p_2) = R(p_1, p_2, 1)$. Since $B(\mathbf{p})$ is a Bernstein polynomial, $b(p_1, p_2)$ must have strictly non-negative coefficients.

Substituting $p_3 = 1$ into $Q$, we obtain $q(p_1, p_2) = (p_1 - p_2)^2 + p_1^3 (p_1 + p_2 + 1)$. The lowest homogeneous part of $q(p_1, p_2)$ is $q_2(p_1, p_2) = (p_1 - p_2)^2$, which has degree $2$.

Let $r_k(p_1, p_2)$ be the lowest non-zero homogeneous part of $r(p_1, p_2)$, where $k \ge 0$ is its degree. The lowest non-zero homogeneous part of the product $b(p_1, p_2) = q(p_1, p_2) r(p_1, p_2)$ has degree $k+2$, and is given exactly by the product of the lowest homogeneous parts:
\[
b_{k+2}(p_1, p_2) = q_2(p_1, p_2) r_k(p_1, p_2) = (p_1 - p_2)^2 r_k(p_1, p_2)
\]
Since $B(\mathbf{p})$ is a Bernstein polynomial, the dehomogenized polynomial $b(p_1, p_2)$ has non-negative coefficients. Because no cancellation can occur between terms of different degrees, its lowest homogeneous part $b_{k+2}(p_1, p_2)$ must also have strictly non-negative coefficients.

Evaluating $b_{k+2}(p_1, p_2)$ along the ray $p_1 = p_2 = 1$ yields $b_{k+2}(1, 1) = (1 - 1)^2 r_k(1, 1) = 0$. For a polynomial with non-negative coefficients, evaluating at $(1,1)$ yields the exact sum of its coefficients. Since this sum is $0$ and all coefficients are non-negative, every coefficient of $b_{k+2}(p_1, p_2)$ must be $0$. Thus, $b_{k+2}(p_1, p_2)$ is the identically zero polynomial.

However, the polynomial ring $\mathbb{R}[p_1, p_2]$ is an integral domain. Since neither $(p_1 - p_2)^2$ nor $r_k(p_1, p_2)$ is the zero polynomial, their product $b_{k+2}(p_1, p_2)$ cannot be zero. This is a contradiction.

Therefore, no such Bernstein polynomial $B(\mathbf{p})$ exists, proving that $g(\mathbf{p})$ cannot be implemented by a finite-automata Bernoulli factory.
\end{proof}

\paragraph{Tool Use Disclosure} The authors used the DeepThink mode of Gemini 3.1 Pro to derive the counterexample and to refine the presentation.



\begin{thebibliography}{9}

\bibitem{keane1994}
M.~S. Keane and O.~B. O'Brien, ``A Bernoulli factory,'' \emph{ACM Transactions on Modeling and Computer Simulation (TOMACS)}, vol.~4, no.~2, pp.~213--219, 1994.

\bibitem{leme2022}
R.~Paes Leme and J.~Schneider, ``Multiparameter Bernoulli factories,'' \emph{The Annals of Applied Probability}, vol.~33, no.~5, pp.~3987--4007, 2023.

\bibitem{morina2021}
G.~Morina, \emph{Extending the Bernoulli Factory to a Dice Enterprise}, Ph.D. dissertation, University of Warwick, 2021.

\bibitem{mossel2005}
E.~Mossel and Y.~Peres, ``New coins from old: computing with unknown bias,'' \emph{Combinatorica}, vol.~25, no.~6, pp.~707--724, 2005.

\bibitem{polya1928}
G.~Pólya, ``Über positive Darstellung von Polynomen,'' \emph{Vierteljahrsschrift der Naturforschenden Gesellschaft in Zürich}, vol.~73, pp.~141--145, 1928.

\end{thebibliography}
\end{document}